%
%
%

\documentclass[graybox]{svmult}


\usepackage{mathptmx}       
\usepackage{helvet}         
\usepackage{courier}        
\usepackage{type1cm}        
%
\usepackage{makeidx}         
\usepackage{graphicx}        
\usepackage{multicol}        
\usepackage[bottom]{footmisc}

\usepackage{amsmath,amsfonts,verbatim,dsfont}

\usepackage{soul}
\usepackage{color}


\makeindex             

\newcommand{\nat}{\mathbb{N}}
\newcommand{\real}{\mathbb{R}}
\newcommand{\rn}{{\mathbb{R}^n}}

\newcommand{\Ee}{\mathbb{E}}
\newcommand{\Pp}{\mathbb{P}}
\newcommand{\LL}{\mathbb{L}}
\newcommand{\LLB}{\widehat{\LL}}
\newcommand{\Id}{\mathds 1}

\newcommand{\Cauchy}{\mathsf{C}}

\newcommand{\poisol}{F}

\DeclareMathOperator{\id}{id}
\DeclareMathOperator{\tr}{Tr}
\DeclareMathOperator{\ex}{Ex}

\DeclareMathOperator{\Laplace}{\varDelta}
\DeclareMathOperator{\dist}{\operatorname{dist}}


\begin{document}

\allowdisplaybreaks

\title*{A Probabilistic proof of the breakdown of Besov regularity in $L$-shaped domains}
\author{Victoria  Knopova \and Ren\'e L. Schilling}
\titlerunning{Breakdown of Besov regularity in $L$-shaped domains}
\institute{Victoria  Knopova
\at
Institut f\"ur Mathematische Stochastik,
Fachrichtung Mathematik,
TU Dresden,
01062 Dresden, Germany.
\email{viktoriya.knopova@tu-dresden.de}
\and
Ren\'e L.\ Schilling
\at
Institut f\"ur Mathematische Stochastik,
Fachrichtung Mathematik,
TU Dresden,
01062 Dresden, Germany.
\email{rene.schilling@tu-dresden.de}
}
%
%
\maketitle

\abstract{We provide a probabilistic approach in order to investigate the smoothness of the solution to the Poisson and Dirichlet problems in $L$-shaped domains. In particular, we obtain (probabilistic) integral representations \eqref{fG}, \eqref{fD2}--\eqref{fD22} for the solution. We also recover Grisvard's classic result on the angle-dependent breakdown of the regularity of the solution measured in a Besov scale.
\\
\textbf{Key Words.} Brownian Motion; Dirichlet Problem; Poisson Equation; Conformal Mapping; Stochastic Representation; Besov Regularity.
\\
\textbf{MSC 2010.} 60J65; 35C15; 35J05; 35J25; 46E35.
}

\section{Introduction}

Let us consider the (homogeneous) Dirichlet problem
\begin{equation}\label{D1}
\begin{aligned}
\Laplace f &=0 && \text{in $G$},\\
f|_{\partial G}& = h &&\text{on $\partial G$},
\end{aligned}
\end{equation}
where $G\subset \mathbb{R}^d$ is a domain with Lipschitz  boundary $\partial G$ and $\Laplace$ denotes the Laplace operator, i.e.\ $\Laplace= \sum_{i=1}^d \frac{\partial^2}{\partial x_i^2}$.   In order to show that there exists a solution to \eqref{D1} which belongs to some subspace of $L_p(G)$, say, to the Besov space $B_{pp}^\sigma(G)$, $\sigma>0$, it is necessary that  $h$  is an element of the trace space of $B_{pp}^\sigma(G)$ on $\partial G$; it is well known that the trace space is given by $B_{pp}^{\sigma-1/p}(\partial G)$, see Jerison \& Kenig \cite[Theorem 3.1]{JK95}, a more general version can be found in Jonsson \& Wallin
\cite[Chapter VII]{JW84}, and for domains with $C^\infty$-boundary a good reference is Triebel \cite[Sections 3.3.3--4]{Tr83}. The smoothness of the solution $f$, expressed by the parameter $\sigma$ in $B_{pp}^\sigma(G)$, is, however, not only determined by the smoothness of $h$, but also by the geometry of $G$. It seems that Grisvard \cite{Gr85} is the first author to quantify this in the case when $G$ is a non-convex polygon. Subsequently, partly due to its relevance in scientific computing, this problem attracted a lot of attention; for instance, it  was  studied by Jerison \& Kenig \cite{JK95}, by  Dahlke \& DeVore \cite{DD97} in connection with wavelet representations of Besov functions, by Mitrea \& Mitrea \cite{MM03} and Mitrea, Mitrea \& Yan  \cite{MMY10} in H\"older spaces,  to mention but a few references.

In this note we use a probabilistic approach to the problem and we obtain a probabilistic interpretation in the special case when $G$ is an \emph{$L$-shaped domain} of the form $\LL:=  \mathbb{R}^2\backslash \{(x,y):\,x,y\geq 0\}$, see Figure~\ref{L-shape1}, and in an $L_2$-setting.
\begin{figure}\label{L-shape1}
\sidecaption
\includegraphics[scale=0.8]{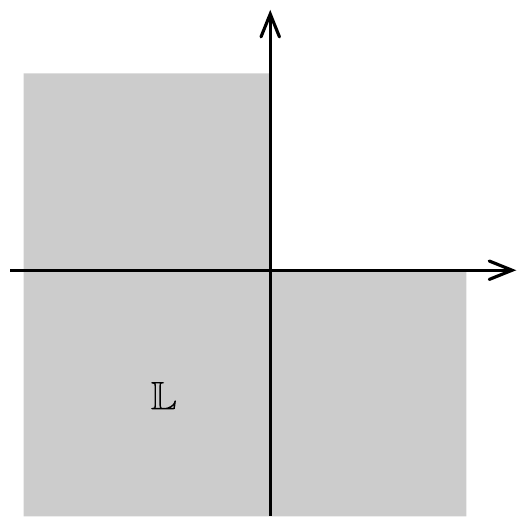}
\caption{The $L$-shaped model domain $\LL\subset\real^2$.}
\end{figure}
This is the model problem for all non-convex domains with an obtuse interior angle.
In this case the Besov space $B_{22}^\sigma(\LL)$ coincides with the Sobolev--Slobodetskij space $W_2^\sigma(\LL)$.  In particular, we
\begin{itemize}
\item
    give a probabilistic interpretation of the solution to \eqref{D1} with $G=\LL$;

\item
    provide a different proof of the fact that the critical order of smoothness of $f$ is $\sigma< \pi/\frac{3\pi}{2}=\frac{2}{3}$, i.e.´\ even for $h\in C_0^2(\partial \LL)$ we may have
    \begin{equation}\label{eq-main}
        f\in W_{2,\mathrm{loc}}^{1+ \sigma}(\LL), \quad \sigma<  \tfrac{2}{3},
        \qquad\text{and}\qquad
        f\notin W_2^{1+ \sigma}(\LL), \quad \sigma\geq \tfrac{2}{3};
    \end{equation}

\item
    apply the ``breakdown of regularity'' result to the Poisson (or inhomogeneous Dirichlet) problem.
\end{itemize}
It is clear that this result holds in a more general setting, if we replace the obtuse angle $3\pi/2$ by some $\theta\in (\pi, 2\pi)$.

Results of this type were proved for polygons and in a H\"older space setting by Mitrea \& Mitrea \cite{MM03}. Technically, our  proof is close (but different) to that given in \cite{MM03}---yet our staring idea is different. Dahlke \& DeVore \cite{DD97} proved this regularity result analytically using a wavelet basis for $L_p$-Besov spaces.

Problem \eqref{D1} is closely related to the Poisson (or nonhomogeneous Dirichlet) problem
\begin{equation}\label{P1}
\begin{aligned}
    \Laplace\poisol  &=g && \text{on $G$},\\
    \poisol|_{\partial G}& = 0 &&\text{on $\partial G$}.
    \end{aligned}
\end{equation}
If $G$ is bounded and  has a $C^\infty$-boundary, the problems \eqref{D1} and \eqref{P1} are equivalent. Indeed, in this case for every right-hand side $g\in L_2(G)$ of \eqref{P1} there exists a unique solution $\poisol \in W_2^2(G)$,  see Triebel \cite[Theorem 4.3.3]{Tr83}. Denote by $N$ the Newtonian potential  on $\mathbb{R}^d$ and define $w:=g* N$; clearly, $\Laplace w=g$ on $G$ and $w\in W_2^2(G)$. Since the boundary is smooth, there is a continuous  linear trace operator $\tr: W_2^{2}(G)\to W_p^{3/2}(\partial G)$ as well as a continuous linear extension operator $\ex: W_2^{3/2}(\partial G)\to W_2^2(G)$, such that $\tr \circ \ex = \id$, cf.\ Triebel \cite{Tr83}. Hence, the function $f:= w-\poisol$ solves the inhomogeneous Dirichlet problem \eqref{D1} with $h=\tr w$ on $\partial G$.
On the other hand, let $f$ be the (unique) solution to \eqref{D1}. Since there exists a continuous linear extension operator from $W_2^{3/2}(\partial G)$ to $W_2^2(G)$ given by $\tilde{h}= \ex h$, we see that the function $\poisol := f-\tilde{h}$ satisfies \eqref{P1} with $g= \Laplace \tilde{h}$.

If the boundary $\partial G$ is Lipschitz the situation is different. It is known, see for example Jerison \& Kenig \cite[Theorem~B]{JK95}) that, in general, on a Lipschitz domain $G$ and for $g\in L_2(G)$ one can only expect that the solution $\poisol$ to \eqref{P1} belongs to $W_2^{3/2}(G)$; there are counterexamples of domains, for which $\poisol$ cannot be in $W_2^{\alpha}(G)$ for any $\alpha>3/2$. Thus, the above procedure does not work in a straightforward way.   However, by our strategy we can recover the negative result for this concrete domain, cf.\ Theorem~\ref{t2}:   \emph{If $g\in H_1(\real^2)\cap  W_2^1(\LL) $, then the solution $\poisol$ to \eqref{P1} is not in $W_2^{1+\sigma}(\LL)$ for any $\sigma\geq 2/3$}. Here $H_1(\real^2)\subset L_1(\real^2)$ is the Hardy space, cf.\ Stein \cite{stein}.

If $G$  is unbounded, the solution to \eqref{D1} might be not unique and, in general, it is only in the local space $W^2_{2,\mathrm{loc}}(G)$ even if $\partial G$ is smooth, cf.\ Gilbarg \& Trudinger \cite[Chapter 8]{gil-tru}. On the other hand,  if the complement $G^c$ is non-empty, if no component of $G^c$  reduces to a single point, and if the  boundary value $h$ is bounded and continuous on $\partial G$, then there exists a unique bounded solution to \eqref{D1} given by the convolution with the Poisson kernel, see Port \& Stone \cite[Theorem IV.2.13]{PS78}.

A strong motivation for this type of results comes from numerical analysis and approximation theory, because the exact Besov smoothness of $u$ is very important for computing $u$ and the feasibility of adaptive computational schemes, see  Dahlke \& DeVore \cite{DD97}, Dahlke, Dahmen \& DeVore \cite{DDD97}, DeVore \cite{De98}, Cohen, Dahmen \& DeVore \cite{CDD01}, Cohen \cite{C03}; an application to SPDEs is in Cioika et.\ al.\ \cite{Ci11,Ci14}. More precisely---using the set-up and the notation of \cite{CDD01}---let $\{\psi_\lambda, \,\lambda\in \Lambda\}$ be a basis of wavelets on $G$ and assume that the index set $\Lambda$ is of the form $\Lambda= \bigcup_{i\geq 0} \Lambda_i$ with (usually hierarchical) sets $\Lambda_i$ of cardinality $N_i$. By $u_{\Lambda_i}$ we denote the Galerkin approximation of $u$ in terms of the wavelets $\{\psi_\lambda\}_{\lambda\in \Lambda_i}$ (this amounts to solving a system of linear equations), and by $e_{N_i}(u):= \|u-u_{\Lambda_i}\|_p $ the approximation error in this scheme. Then it is known, cf. \cite[(4.2) and (2.35)]{CDD01}, that
\begin{equation}\label{Gal}
    u\in W_p^\sigma(G) \implies  e_{N_i}(u)  \leq C N_i^{-\sigma/d}, \quad i\geq 1.
\end{equation}
There is also an adaptive  algorithm for choosing the index sets $(\Lambda_i)_{i\geq 1}$.  Starting with an initial set $\Lambda_0$, this algorithm adaptively generates a sequence of nested sets $(\Lambda_i)_{i\geq 1}$; roughly speaking, in each iteration step we choose the next set $\Lambda_{i+1}$ by partitioning the domain of those wavelets $\psi_\lambda$, $\lambda\in \Lambda_i$ (i.e.\ selectively refining the approximation by considering the next generation of wavelets), whose coefficients $u_\lambda$ make, in an appropriate sense, the largest contribution to the sum $u=\sum_{\lambda\in \Lambda_i} u_\lambda \psi_\lambda$.

\runinhead{Notation.} Most of our notation is standard. By $(r,\theta)\in (0,\infty)\times (0,2\pi]$ we denote polar coordinates in $\real^2$, and $\mathbb{H}$ is the lower half-plane in $\real^2$. We write $f\asymp g$ to say that $c f(t)\leq g(t)\leq C f(t)$ for all $t$ and some fixed constants.

\section{Setting and the main result}\label{mainset}

Let $B = (B_t^x)_{t\geq 0}$ be a Brownian motion started at a point $x\in G$. Suppose that there exists a conformal mapping $\varphi: G\to \mathbb{H}$, where $\mathbb{H}:=\{(x_1,x_2)\in \mathbb{R}^2,\,\,x_2\leq 0\}$ is the lower half-plane in $\mathbb{R}^2$. Using the conformal invariance of Brownian motion, see e.g.\ M\"orters \& Peres \cite[p.~202]{MP10}, we can describe the distribution of the Brownian motion inside $G$ in terms of \emph{some} Brownian motion  $W$ in $\mathbb{H}$, which is much easier to handle. Conformal invariance of Brownian motion means that there exists a planar Brownian motion $W=(W_t^y)_{t\geq 0}$ with starting point  $y\in \mathbb{H}$ such that, under the conformal map $\varphi:G\to\mathbb{H}$  with boundary identification,
\begin{equation}\label{BW1}
    \left(\varphi(B_t^x)\right)_{0\leq t\leq \tau_G}
    \quad\text{has the same law as}\quad
    \left(W_{\xi(t)}^{\varphi(x)}\right)_{0\leq t \leq \tau_{\mathbb H}};
\end{equation}
the time-change $\xi$ is given by $\xi(t):= \int_0^t |\varphi'(B^x_s)|^2\,{\D}s$; in particular, $\xi(\tau_G)= \tau_\mathbb{H}$, where $\tau_G = \inf\{t>0: B_t^x\in\partial G\}$ and $\tau_\mathbb{H}:= \inf\{ t>0: \, W_t^{\varphi(x)}\in \partial \mathbb{H}\}$ are the first exit times from $G$ and $\mathbb H$, respectively.

Let us recall some properties of a planar Brownian motion in $\mathbb{H}$ killed upon exiting at the boundary $\partial\mathbb{H} = \{(w_1,w_2): \,w_2=0\}$. The distribution of the exit position $W_{\tau_\mathbb{H}}$  has the transition  probability density
\begin{equation}\label{WH}
    u\mapsto p_\mathbb{H}(w,u)
    = \frac{1}{\pi} \frac{|w_2|}{|u-w_1|^2 + w_2^2},\quad w=(w_1,w_2)\in \mathbb{H},
\end{equation}
cf.\ Bass~\cite[p.~91]{Bass}. Recall that a random variable $X$ with values in $\real$ has a Cauchy distribution, $X\sim\Cauchy(m,b)$, $m\in \real$, $b>0$,  if it has a transition probability density of the form
$$
    p(u)= \frac{1}{\pi}\frac{b}{(u-m)^2 +b^2}, \quad u\in\real;
$$
if $X\sim \Cauchy(m,b)$, then $Z:=(X-m)/b\sim \Cauchy(0,1)$. Thus, the probabilistic interpretation of $W_{\tau_\mathbb{H}}^w$ is
\begin{equation}\label{Cauchy}
    W_{\tau_\mathbb{H}}^w \sim Z^w\sim \Cauchy(w_1,|w_2|)
    \quad\text{or}\quad
    W_{\tau_\mathbb{H}}^w \sim \frac{Z-w_1}{|w_2|}
    \quad\text{where}\quad Z\sim \Cauchy(0,1).
\end{equation}

This observation allows us to simplify the calculation of functionals $\Theta$ of a Brownian motion $B$ on $G$, killed upon exiting from $G$, in the following sense:
\begin{equation}\label{fG0}\begin{aligned}
    \Ee \Theta(B_{\tau_G}^x)
    = \Ee \left(\Theta\circ \varphi^{-1}\right)(\varphi(B_{\tau_G}^x))
    &= \Ee\left(\Theta\circ \varphi^{-1}\right) (W_{\tau_\mathbb{H}}^{\varphi(x)})\\
    &= \Ee\left(\Theta\circ \varphi^{-1}\right) \left( \frac{Z-\varphi_1(x)}{|\varphi_2(x)|}\right).
\end{aligned}\end{equation}
In particular, the formula \eqref{fG0} provides us with a probabilistic representation for the solution $f$ to the Dirichlet problem \eqref{D1}:
\begin{equation}\label{fG}
    f(x)= \Ee h(B_{\tau_G}^x)= \Ee\left(h\circ \varphi^{-1}\right) (W_{\tau_\mathbb{H}}^{\varphi(x)}).
\end{equation}
\begin{remark}
    The formulae in \eqref{fG0} are very helpful for the numerical calculation of the values $\Ee \Theta(B_{\tau_G}^x)$. In fact, in order to simulate $\Theta(B_{\tau_G}^x)$, it is enough to simulate the Cauchy distribution $Z\sim \Cauchy(0,1)$ and then evaluate \eqref{fG0} using the  Monte Carlo method.
\end{remark}

We will now consider the $L$-shaped domain $\LL$. It is easy to see that the conformal mapping of $\LL$ to $\mathbb{H}$ is given by
\begin{equation}\label{Conf1}
    \varphi(z)= \eul^{\imag\frac{2\pi}{3}} z^{2/3} = r^{2/3} \exp\left( \tfrac{2}{3} \imag (\theta+\pi)\right)=\varphi_1(r,\theta)+ \imag \varphi_2(r,\theta),
\end{equation}
cf.\ Figure~\ref{fig-conf}, where $\theta = \arg z \in (0,2\pi]$.
\begin{figure}\label{fig-conf}
\centering
    \caption{Conformal mapping from $\LL$ to $\mathbb{H}$ and its behaviour at the boundaries.}
    \includegraphics[scale=0.8]{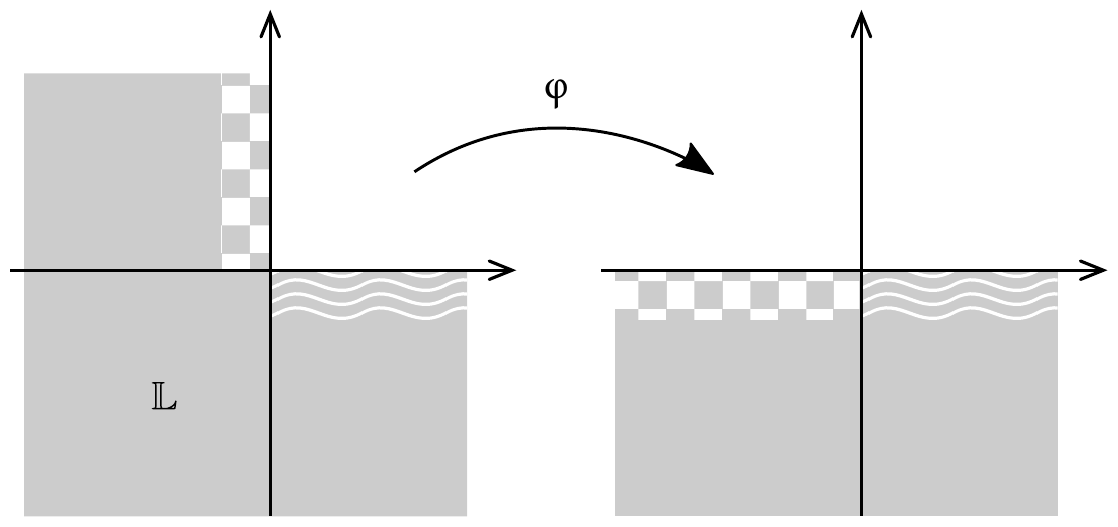}
\end{figure}

The following lemma uses the conformal mapping $\varphi: \LL\to \mathbb{H}$ and the conformal invariance of Brownian motion to obtain the distribution of $B^x_{\tau_\LL}$.
\begin{lemma}\label{tL}
    Let $\LL$ be an $L$-shaped domain as shown in Fig.~\ref{L-shape1}. The exit position $B_{\tau_\LL}$ of Brownian motion from $\LL$ is a random variable on $\partial \LL= \{0\}\times [0,\infty)\cup[0,\infty)\times \{0\}$ which has the following probability distribution:
    \begin{equation}\label{PBL}
    \begin{aligned}
        \Pp\left(B^x_{\tau_\LL}\in {\D}y\right)
        &= \frac{1}{\pi} \frac{|\varphi_2(x)|}{|\varphi_1(x)+y_2|^2+|\varphi_2(x)|^2} \,{\D}y_2 \,\delta_0({\D}y_1)\\
        &\qquad\mbox{} + \frac{1}{\pi}\frac{|\varphi_2(x)|}{|\varphi_1(x)-y_1|^2+|\varphi_2(x)|^2} \,{\D}y_1 \,\delta_0({\D}y_2).
    \end{aligned}
    \end{equation}
\end{lemma}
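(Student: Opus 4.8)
The plan is to transport the exit law from the half-plane, where it is explicit, back to $\partial\LL$ by means of the conformal map $\varphi$. First I would invoke the conformal invariance \eqref{BW1}: since $\varphi:\LL\to\mathbb{H}$ is conformal with boundary identification and the time change satisfies $\xi(\tau_\LL)=\tau_\mathbb{H}$, the exit positions are related by $\varphi(B^x_{\tau_\LL})\overset{\mathrm{d}}{=}W^{\varphi(x)}_{\tau_\mathbb{H}}$. (The exit time is finite almost surely: the image process $W$ is planar Brownian motion, and its vertical coordinate is a one-dimensional Brownian motion which hits $0$, so $W$ leaves $\mathbb{H}$ almost surely.) By \eqref{WH}--\eqref{Cauchy} the law of $W^{\varphi(x)}_{\tau_\mathbb{H}}$ on $\partial\mathbb{H}=\real\times\{0\}$ is the Cauchy law $\Cauchy(\varphi_1(x),|\varphi_2(x)|)$, with density $\frac{1}{\pi}\frac{|\varphi_2(x)|}{(u-\varphi_1(x))^2+|\varphi_2(x)|^2}$ in the real variable $u$.

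Next I would read off the boundary correspondence from \eqref{Conf1}. The two rays forming $\partial\LL$ are the positive $x$-axis (the ray $\theta=2\pi$, i.e.\ $[0,\infty)\times\{0\}$) and the positive $y$-axis (the ray $\theta=\pi/2$, i.e.\ $\{0\}\times[0,\infty)$). Inserting $\theta=2\pi$ and $\theta=\pi/2$ into $\varphi(r,\theta)=r^{2/3}\exp\!\big(\tfrac23 \imag(\theta+\pi)\big)$ shows that $\varphi$ sends the positive $x$-axis onto the positive real half-axis $\{u>0\}$ and the positive $y$-axis onto the negative real half-axis $\{u<0\}$. Hence the splitting of $\partial\mathbb{H}$ at the origin corresponds exactly to the alternative ``$B$ exits on the $x$-axis'' versus ``$B$ exits on the $y$-axis'', which produces the two additive contributions in \eqref{PBL}; the Dirac factors $\delta_0(\D y_1)$ and $\delta_0(\D y_2)$ merely record on which of the two rays the exit point lies.

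Finally I would restrict the Cauchy law to each half-axis and carry it back along $\varphi$ ray by ray. For a Borel set $A$ contained in one ray, conformal invariance gives $\Pp(B^x_{\tau_\LL}\in A)=\Pp(W^{\varphi(x)}_{\tau_\mathbb{H}}\in\varphi(A))$, so it suffices to substitute the boundary parametrisation $u=\varphi_1$ of that ray into the Cauchy density. On the positive $x$-axis the boundary coordinate is positive and the substitution yields the term with $|\varphi_1(x)-y_1|^2$, whereas on the positive $y$-axis $\varphi_1$ takes negative values, so the orientation reverses and $u$ is replaced by $-y_2$; this is the source of the sign change and gives the term with $|\varphi_1(x)+y_2|^2$. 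Assembling the two pieces produces \eqref{PBL}. The step that needs the most care is precisely this ray-by-ray substitution: selecting the correct branch of $\varphi$ on each boundary ray, keeping the orientations (and hence the signs $\pm$) straight, and handling the parametrisation of the nonlinear boundary map consistently across the two disjoint contributions.
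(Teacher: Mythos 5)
Your argument is correct and is essentially the paper's own proof: both rest on the conformal invariance \eqref{BW1}, the half-plane exit density \eqref{WH} (equivalently the Cauchy law \eqref{Cauchy}), and the identification of the two boundary rays of $\partial\LL$ with the half-axes $\{u>0\}$ and $\{u<0\}$ of $\partial\mathbb{H}$, including the same sign convention $u=-y_2$ on the vertical ray. The only difference is bookkeeping: the paper verifies \eqref{PBL} by computing the characteristic function of $B^x_{\tau_\LL}$ and splitting the resulting integral at $u=0$, whereas you push the Cauchy exit law forward ray by ray, which produces the same two terms.
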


Lemma~\ref{tL} provides us with an explicit representation of the solution $f(x)$ to the Dirichlet problem \eqref{D1} for $G=\LL$. Indeed, since
(cf.\ Figure~\ref{fig-conf})
\begin{equation*}
    \left(h\circ \varphi^{-1}\right)(u)
    =
    \begin{cases}
        h(u,0), & u\geq 0, \\
        h(0,-u), &u\leq 0,
    \end{cases}
    \end{equation*}
we get
\begin{equation}\label{fD2}
    f(x)
    = \int_\real f_0(u) \,p_\mathbb{H}(\varphi(x),u) \,{\D}u
    = \frac{1}{\pi} \int_\real f_0(u)\, \frac{|\varphi_2(x)|}{(\varphi_1(x)-u)^2+ |\varphi_2(x)|^2}\,{\D}u,
\end{equation}
where
\begin{equation}\label{f0}
    f_0(u)
    := h(0,-u)\Id_{(-\infty,0)}(u) + h(u,0)\Id_{[0,\infty)}(u).
\end{equation}
After a change of variables, this becomes
\begin{equation}\label{fD22}
    f(x)
    =  \frac{1}{\pi} \int_\real f_0\left( u |\varphi_2(x)|+ \varphi_1(x)\right) \frac{{\D}u}{u^2+1}.
\end{equation}
If we want to investigate the smoothness of $f$, it is more convenient to rewrite $f$ in polar coordinates. From the right-hand side of \eqref{Conf1} we infer
\begin{equation}\label{Phi1}
    \varphi_1(r,\theta)
    = r^{2/3} \cos \Phi_\theta
    \quad\text{and}\quad
    \varphi_2(r,\theta)= r^{2/3} \sin \Phi_\theta,
\end{equation}
where we use the shorthand
\begin{equation*}
    \Phi_\theta:= \frac{2}{3}(\pi+\theta).
\end{equation*}
Observe that for $\theta\in (\pi/2, 2\pi]$ we have  $\pi< \Phi_\theta\leq 2\pi$, hence $\varphi_2  \leq 0$. This yields
\begin{equation}\label{f1-0}
    f(r,\theta)
    = \frac{1}{\pi}\int_\real f_0\left(r^{2/3} \cos \Phi_\theta  - r^{2/3}v\sin \Phi_\theta\right)\, \frac{{\D}v}{1+v^2}.
\end{equation}

Now we turn to the principal objective of this note: the smoothness of $f$ in the Sobolev--Slobodetskij scale.
\begin{theorem}\label{t1}
    Consider the \textup{(}homogeneous\textup{)} Dirichlet problem \eqref{D1} with a boundary term $f_0$, given by \eqref{f0}, and let $f$ denote the solution to \eqref{D1}.
     \begin{enumerate}\renewcommand{\theenumi}{\textup{\alph{enumi}}}
     \item\label{t1-a}
        If $f_0\in W_1^2(\real) \cap  W_2^2(\real)$ satisfies
        \begin{equation}\label{f0}
            \liminf_{\epsilon \to 0} \int_{|x|>\epsilon} \frac{f_0'(x)}{x} \,{\D}x\neq 0,
        \end{equation}
        then $f\notin W_2^{1+\sigma}(\LL)$, even $f\notin W_{2,\mathrm{loc}}^{1+\sigma}(\LL)$, for any $\sigma\geq 2/3$.

     \item\label{t1-b}
        If $f_0\in W_1^2(\real)\cap  W_p^1(\real)$, where $p>\max\{2,\, 2/(2-3\sigma)\}$, then $f\in W_{2,\mathrm{loc}}^{1+\sigma}(\LL)$ for all  $\sigma\in (0,2/3)$.
     \end{enumerate}
\end{theorem}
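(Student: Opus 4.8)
Both parts rest on the representation $f=u\circ\varphi$, where $u$ is the harmonic (Poisson) extension of $f_0$ to $\mathbb{H}$ already appearing in \eqref{fD2}, and on the fact that the conformal map $\varphi(z)=\eul^{\imag\frac{2\pi}{3}}z^{2/3}$ concentrates \emph{all} of its non-smoothness at the corner $z=0$, where $\varphi(0)=0\in\partial\mathbb{H}$ and $|\varphi'(z)|\asymp r^{-1/3}$. The benchmark is the model singular function $r^{2/3}g(\theta)$ with $g$ smooth and not identically zero: a function which is positively homogeneous of degree $\alpha$ near the origin in $\real^2$ (with smooth, non-trivial angular part) belongs to $W_2^s$ locally if and only if $s<\alpha+1$. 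Thus $r^{2/3}g(\theta)$ sits exactly at the borderline $1+\sigma=\tfrac53$, i.e.\ $\sigma=\tfrac23$, while its gradient is homogeneous of degree $-\tfrac13$ and lies in $W_2^\sigma$ locally iff $\sigma<\tfrac23$. Part (a) shows that this singular term is genuinely present in $f$, with a coefficient governed by the principal value $P:=\liminf_{\epsilon\to0}\int_{|x|>\epsilon}f_0'(x)/x\,\mathrm{d}x$; part (b) shows that below the threshold nothing rougher than $r^{2/3}$ occurs.

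\medskip
\noindent\textbf{The breakdown, part (a).} Starting from \eqref{f1-0}, differentiating in $r$ and substituting $w=r^{2/3}(\cos\Phi_\theta-v\sin\Phi_\theta)$ turns the Cauchy weight into the Poisson kernel of $\mathbb{H}$ and yields the exact identity
\begin{equation}\label{prf}
    \partial_r f(r,\theta)=\frac{2}{3\pi r}\,|\varphi_2(r,\theta)|\int_\real\frac{w\,f_0'(w)}{(w-\varphi_1(r,\theta))^2+\varphi_2(r,\theta)^2}\,\mathrm{d}w,
\end{equation}
the differentiation under the integral being justified by $f_0'\in L_1(\real)\cap L_2(\real)$. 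Writing $w=(w-\varphi_1)+\varphi_1$ splits the kernel into a conjugate-Poisson and a Poisson part. As $r\to0$ we have $(\varphi_1,|\varphi_2|)\to(0,0)$ with $\varphi_1/|\varphi_2|=\cot\Phi_\theta$ fixed; the conjugate-Poisson part converges to $P$, while the Poisson part reproduces $f_0'(0)$. Using $|\varphi_2|=r^{2/3}|\sin\Phi_\theta|$ and $\varphi_1=r^{2/3}\cos\Phi_\theta$ this gives the leading asymptotics
\begin{equation}\label{prfasymp}
    \partial_r f(r,\theta)=\frac{2}{3\pi}\,r^{-1/3}\bigl(P\,|\sin\Phi_\theta|+\pi f_0'(0)\cos\Phi_\theta\bigr)+o(r^{-1/3}),\qquad r\to0.
\end{equation}
Since $|\sin\Phi_\theta|$ and $\cos\Phi_\theta$ are linearly independent on the range $\Phi_\theta\in(\pi,2\pi)$ corresponding to $\theta\in(\pi/2,2\pi)$, the angular profile in \eqref{prfasymp} is not identically zero as soon as $P\neq0$, which is precisely the hypothesis.

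\medskip
\noindent\textbf{Conclusion of (a) and the main obstacle.} Equation \eqref{prfasymp} exhibits $\nabla f$ as a function whose leading part is positively homogeneous of degree $-\tfrac13$ with non-trivial angular profile. By the homogeneity criterion above such a function fails to belong to $W_2^{2/3}$ in any neighbourhood of the origin, whence $f\notin W_{2,\mathrm{loc}}^{5/3}$; since $W_2^{1+\sigma}\hookrightarrow W_2^{5/3}$ for every $\sigma\geq\tfrac23$, the claim follows for all such $\sigma$ at once. The delicate point---and the \emph{main obstacle}---is to turn the pointwise asymptotics \eqref{prfasymp} into a genuine statement about the Gagliardo seminorm: one must show that the degree-$(-\tfrac13)$ term forces $\iint|\nabla f(x)-\nabla f(y)|^2|x-y|^{-2-4/3}\,\mathrm{d}x\,\mathrm{d}y=+\infty$ and that the correction in \eqref{prfasymp} cannot cancel this divergence. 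At the critical index a naive ``singular plus regular'' splitting is not available, because composition with $\varphi$ destroys too much smoothness to place the remainder in $W_2^{5/3}$; instead I would argue directly on the seminorm, restricting the double integral to pairs on a fixed ray (or thin annuli) in a range of $\theta$ where the angular profile is bounded away from zero, or extract the coefficient of $r^{2/3}$ by pairing $f$ against a dual singular function in the spirit of Grisvard. The Hölder regularity of $f_0'$, coming from the embedding $W_2^2(\real)\hookrightarrow C^{1,1/2}(\real)$, is what makes the error in the Poisson and conjugate-Poisson limits of relative order $r^{1/3}$, so that the correction is genuinely milder than the leading $r^{-1/3}$ singularity along such rays.

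\medskip
\noindent\textbf{The positive result, part (b).} Here the target smoothness lies below the threshold, and $r^{2/3}g(\theta)$ itself already belongs to $W_{2,\mathrm{loc}}^{1+\sigma}$ for $\sigma<\tfrac23$; hence it suffices to produce a matching \emph{upper} bound showing that $f$ is no worse. I would localize: on bounded subsets of $\LL$ bounded away from the corner the map $\varphi$ is smooth with bounded derivatives, so $f=u\circ\varphi$ inherits the ample regularity of the harmonic extension $u$ of $f_0\in W_p^1\cap W_1^2$. Near the corner I would estimate the Gagliardo seminorm $\iint|\nabla f(x)-\nabla f(y)|^2|x-y|^{-2-2\sigma}\,\mathrm{d}x\,\mathrm{d}y$ directly from \eqref{prf}, using that $W_p^1(\real)\hookrightarrow C^{0,1-1/p}(\real)$ so that $f_0\in C^{0,1-1/p}$ with $1-\tfrac1p>\tfrac32\sigma$, the latter being exactly the hypothesis $p>2/(2-3\sigma)$. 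The Hölder modulus of $f_0$ controls the increments of $\nabla f$ across the corner, and this lower bound on $p$ is precisely the condition making the double integral converge for $\sigma<\tfrac23$; the limiting case $\sigma\to\tfrac23$ forces $p\to\infty$, in agreement with the breakdown established in part (a).
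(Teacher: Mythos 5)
There is a genuine gap in part (a), and you name it yourself. Your pointwise asymptotics $\partial_r f(r,\theta)\sim \frac{2}{3\pi}r^{-1/3}\bigl(P\,|\sin\Phi_\theta|+\pi f_0'(0)\cos\Phi_\theta\bigr)$ is essentially the paper's computation (there, the conjugate-Poisson limit appears as $\liminf_{\epsilon\to0}\lim_{\rho\to0}J(\rho,\theta)=\sin(\omega_\theta-\theta)\liminf_{\epsilon\to0}\int_{|x|>\epsilon}f_0'(x)/x\,\mathrm{d}x$, together with $I(\rho,\theta)\to0$ from $\int_\real f_0''\,\mathrm{d}x=0$). But your ``homogeneity criterion'' applies to the exactly homogeneous model $r^{2/3}g(\theta)$, not to $f$: knowing only that the remainder is $o(r^{-1/3})$ \emph{pointwise} says nothing about its increments, and the Gagliardo seminorm of order $2/3$ is not monotone under pointwise little-$o$ perturbations --- the correction could in principle have divergent or compensating oscillatory differences at exactly the critical scale. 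Your proposed fixes (restricting the double integral to rays or thin annuli, or pairing against a dual singular function ``in the spirit of Grisvard'') are directions, not arguments: restricting to lower-dimensional sets does not bound the planar seminorm from below without a slicing argument that itself needs integrated, not pointwise, control of the remainder. So the breakdown statement is not actually proved in your write-up.

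The missing idea --- and the paper's key tool --- is the Jerison--Kenig characterization (Theorem~\ref{JK}, i.e.\ \cite[Theorem 4.1]{JK95}): for a function \emph{harmonic} on a bounded domain, $f\in B_{22}^{1+\sigma}$ if and only if $\dist(\cdot,\partial\Omega)^{1-\sigma}\,|\nabla^{2}f|+|\nabla f|+|f|\in L_2$. Because this is an equivalence, no seminorm or cancellation analysis is ever needed: after Lemma~\ref{B1} puts $f$ and $\nabla f$ in $L_2(\LLB)$, the paper computes $\partial^2 f/\partial x_1^2$ explicitly, and then a pointwise \emph{lower} bound $|J+I|\geq C>0$ on a subregion $K_{11}$ of positive angular measure (available precisely when your $P\neq0$) forces divergence through $\int_0^1\rho^{1-3\sigma}\,\mathrm{d}\rho=\infty$ for $\sigma\geq2/3$; and pointwise \emph{upper} bounds $|J+I|\leq C\rho^{-1/p}|\sin\Phi_\theta|^{-1}$ give convergence via $\int_0^1\rho^{1-3\sigma-2/p}\,\mathrm{d}\rho<\infty$ when $p>2/(2-3\sigma)$, which matches your exponent bookkeeping $1-\tfrac1p>\tfrac32\sigma$. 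Your part (b) has the same status as your part (a): a plausible program with no estimate carried out, and your localisation ``away from the corner $f$ inherits ample regularity'' overlooks the flat boundary portions, where with $f_0\in W_p^1$ only, the second derivatives of $f$ blow up along the edges; the paper must (and does) treat these separately on the sectors $K_2$, $K_3$ near $\theta=\pi/2$ and $\theta=2\pi$, where the relevant weight is the distance to the nearby edge, $\dist((r,\theta),\Gamma)\leq r|\cos\theta|$, and integrability of $|\cos\theta|^{-2\sigma}$ has to be checked. To repair your proof along your own lines you would either have to import Theorem~\ref{JK} (at which point you recover the paper's proof), or upgrade your remainder estimates from pointwise to weighted-$L_2$ form --- which is exactly what the explicit computation of $I$ and $J$ accomplishes.
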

\begin{remark}\label{rem1}
    By the Sobolev embedding theorem we have $W_1^2(\real) \cap W_2^2(\real) \subset C_b(\real)$ and $W_1^2(\real)\cap  W_p^1(\real)\subset C_b(\real)$ if $p>\max\{2,\, 2/(2-3\sigma)\}$. Hence, the function $f$ given by \eqref{fD22} is the unique bounded solution to \eqref{D1}.
\end{remark}
The idea of the proof of Theorem~\ref{t1} makes essential use of the results by Jerison \& Kenig \cite{JK95} combined with the observation that it is, in fact, enough to show the claim for $\LLB:=\LL\cap B(0,1)$, where $B(0,1):= \{ x\in \mathbb{R}^2:\, |x|< 1\}$.

Theorem~\ref{t1} allows us to prove the negative result for the solution to the Poisson problem, which improves \cite[Theorem~B]{JK95}.
Recall that  $H_1(\real^2)\subset L_1(\real^2)$  is   the usual Hardy space, cf.\ Stein \cite{stein}.
\begin{theorem}\label{t2}
    Consider the Poisson \textup{(}inhomogeneous Dirichlet\textup{)} problem \eqref{P1} with right-hand side  $g\in H_1(\real^2)\cap W_2^1(\LL)$ such that $f_0(x):=\left((\tr g*N)\circ \varphi^{-1}\right) (x)$ satisfies \eqref{f0}, where $N(x)= (2\pi)^{-1}\log |x|$ is the Newton kernel. Then the solution $\poisol \notin W_2^{1+\sigma} (\LL)$, even $\poisol \notin W_{2,\mathrm{loc}}^{1+\sigma} (\LL)$, for any $\sigma\geq 2/3$.
\end{theorem}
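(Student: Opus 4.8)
The plan is to reduce the inhomogeneous problem \eqref{P1} to the homogeneous Dirichlet problem \eqref{D1} and then to invoke Theorem~\ref{t1}\eqref{t1-a}. Put $w:=g*N$. Since $N$ is the fundamental solution of $\Laplace$ on $\real^2$, we have $\Laplace w=g$ on all of $\real^2$, in particular on $\LL$; hence $f:=w-\poisol$ obeys $\Laplace f=g-g=0$ in $\LL$ and, because $\poisol|_{\partial\LL}=0$, has boundary values $f|_{\partial\LL}=\tr w$. Thus $f$ is exactly the solution of \eqref{D1} with datum $h=\tr w$, and the associated straightened datum is $h\circ\varphi^{-1}=(\tr g*N)\circ\varphi^{-1}=f_0$, which by assumption satisfies the non-degeneracy condition \eqref{f0}.

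Next I would check that $f_0$ meets the regularity hypotheses of Theorem~\ref{t1}\eqref{t1-a}, i.e.\ $f_0\in W_1^2(\real)\cap W_2^2(\real)$, and here both assumptions on $g$ are used. The condition $g\in H_1(\real^2)$ enters through the Newton potential: the second derivatives of $N$ form a Calder\'on--Zygmund convolution kernel that maps $H_1$ boundedly into $L_1$, so $D^2w\in L_1(\real^2)$, while the vanishing-moment structure of $H_1$ produces decay of $w$ at infinity and thereby the $L_1$-integrability of $f_0$ and its derivatives as $|u|\to\infty$. The condition $g\in W_2^1(\LL)$ enters through interior elliptic regularity for $\Laplace w=g$, giving $w\in W_{2,\mathrm{loc}}^3(\LL)$ up to the two open half-lines of $\partial\LL$ and hence smooth traces there in the $W_2$-scale. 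Transporting these traces through $\varphi^{-1}$ yields $f_0\in W_1^2(\real)\cap W_2^2(\real)$, and Theorem~\ref{t1}\eqref{t1-a} then gives $f\notin W_{2,\mathrm{loc}}^{1+\sigma}(\LL)$ for every $\sigma\ge 2/3$; concretely, $f$ carries a non-trivial corner singularity of order $r^{2/3}$.

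It remains to transfer the breakdown from $f$ to $\poisol=w-f$. The natural claim is that $w=g*N$ does not excite the critical $r^{2/3}$-mode of the $3\pi/2$-corner: unlike $f$, the potential $w$ solves its equation on the whole plane, where the origin is an ordinary point, so its corner expansion in $\LL$ should not contain that mode. To make this usable I would split $g=g\Id_{\LL}+g\Id_{\LL^c}$, note that $(g\Id_{\LL})*N$ lies in $W_{2,\mathrm{loc}}^2(\real^2)\subset W_2^{1+\sigma}(\LLB)$ and is harmless, and that $(g\Id_{\LL^c})*N$ is harmonic in $\LL$, so that the singular coefficient of $\poisol$ is governed by the \eqref{f0}-functional applied to the traces just described. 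Since that functional is non-zero by hypothesis while the harmless part contributes nothing, the coefficient of $r^{2/3}$ in $\poisol$ would be non-zero, giving $\poisol\notin W_{2,\mathrm{loc}}^{1+\sigma}(\LL)$ for all $\sigma\ge 2/3$.

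The hard part will be precisely this last step. Because $g$ is controlled only in $H_1(\real^2)$ across the complementary quadrant $\LL^c$, the piece $(g\Id_{\LL^c})*N$ need not lie in $W_2^{5/3}$ near the origin at all, so the clean subtraction ``$w\in W_2^{5/3}$, $f\notin W_2^{5/3}$'' is not available; one must instead isolate the critical corner mode and show that $w$'s contribution to its coefficient vanishes, or at least does not cancel that of $f$. A second delicate point, already implicit in Theorem~\ref{t1}, is that $\varphi(z)=\eul^{\imag 2\pi/3}z^{2/3}$ is singular at the corner, so $\varphi^{-1}$ has unbounded second derivative at $u=0$; verifying that $f_0=(\tr w)\circ\varphi^{-1}$ retains two derivatives in $L_1(\real)\cap L_2(\real)$ near $u=0$ is exactly where the interplay between the global $H_1$-bound and the local $W_2^1(\LL)$-bound is genuinely needed.
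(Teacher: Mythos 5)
Your first two steps are exactly the paper's: you set $w:=g*N$, observe $f:=w-\poisol$ solves \eqref{D1} with $h=\tr w$, and use the joint hypothesis $g\in H_1(\real^2)\cap W_2^1(\LL)$ to give meaning to $\Laplace w=g$ (the paper cites Stein, Theorem~III.3.3) and to get enough regularity on $w$ to conclude $f_0\in W_1^2(\real)\cap W_2^2(\real)$, so that Theorem~\ref{t1}\eqref{t1-a} applies to $f$ (the paper asserts $w\in W_1^3(\real^2)\cap W_2^3(\LL)$ and deduces $f_0\in W_1^2(\real)\cap W_2^{5/2}(\real)$ via the trace theorem). The gap is your third step. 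The paper performs no corner-mode analysis whatsoever: from $w\in W_2^3(\LL)$ it gets $w\in W_{2,\mathrm{loc}}^2(\LL)$, and since $f\notin W_{2,\mathrm{loc}}^{1+\sigma}(\LL)$ while $1+\sigma\leq 2$ (the range $\sigma>1$ follows from $\sigma=2/3$ by monotonicity of the scale on bounded subdomains), the difference $\poisol=w-f$ cannot lie in $W_{2,\mathrm{loc}}^{1+\sigma}(\LL)$ either. You declare this subtraction ``not available'' and substitute a strategy---isolate the coefficient of the critical $r^{2/3}$-mode and show $w$ does not excite it---which you then do not carry out. As written this is not a proof: the assertion that ``the singular coefficient of $\poisol$ is governed by the \eqref{f0}-functional'' is precisely what would have to be established, and your heuristic that $(g\Id_{\LL^c})*N$ is harmless because it is harmonic in $\LL$ proves nothing, since harmonicity in $\LL$ is perfectly compatible with containing the $r^{2/3}$-mode---$f$ itself is the counterexample. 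Moreover, no Kondrat'ev-type expansion machinery is developed in the paper (Theorem~\ref{t1} is proved through the weighted-derivative criterion of Jerison--Kenig, Theorem~\ref{JK}, not through mode coefficients), so your step three would require substantial new apparatus.

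To be fair, your two flagged worries do point at genuinely terse spots in the paper's own argument: the restriction $g\Id_{\LL^c}$ indeed falls out of $H_1(\real^2)$, so Calder\'on--Zygmund theory applied naively to the pieces gives only weak-$L_1$ control of $D^2 w$ near the corner; and $\varphi^{-1}$ does have an unbounded second derivative at $u=0$, so translating $h\in W_2^{5/2}(\partial\LL)$ into $f_0\in W_2^{5/2}(\real)$ is not a pure change of notation. But the productive response to these observations would have been to justify the potential-theoretic assertion $w\in W_1^3(\real^2)\cap W_2^3(\LL)$ (resp.\ the weaker $w\in W_{2,\mathrm{loc}}^2(\LL)$, which is all the subtraction needs) under the stated hypotheses, or to supply the composition estimate for $f_0$---not to abandon the subtraction for an unexecuted alternative. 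In sum: identical reduction to the paper, correct identification of where the hypotheses on $g$ enter, but the decisive transfer from $f$ to $\poisol$ is missing, and the route you sketch in its place is strictly harder than the one-line argument the paper actually uses.
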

The proofs of Theorem~\ref{t1} and \ref{t2} are deferred to the next section.

\section{Proofs}\label{proofs}

\begin{proof}[Proof of Lemma~\ref{tL}]  We calculate the characteristic function of $B^x_{\tau_\LL}$.
As before, let $y=(y_1,y_2)$, $x=(x_1,x_2)$ and $\varphi(x)=(\varphi_1(x),\varphi_2(x))$. We have
\begin{align*}
    \Ee \eul^{\imag\xi\cdot B_{\tau_\LL}^x}
    &\overset{\eqref{BW1}}{=} \Ee \eul^{\imag\xi \cdot\varphi^{-1} (W_{\tau_\mathbb{H}}^{\varphi(x)})} \\
    &= \int_{\mathbb{R}^2}  \eul^{\imag\xi\cdot \varphi^{-1} (y)} \,\Pp(W_{\tau_\mathbb{H}}^{\varphi(x)}\in {\D}y)\\
    &\overset{\eqref{WH}}{=} \frac{1}{\pi} \int_\real \eul^{\imag\xi \cdot \varphi^{-1}(y_1,0)}\frac{|\varphi_2(x)|}{|\varphi_1(x)-y_1|^2+|\varphi_2(x)|^2}{\D}y_1\\
    &= \frac{1}{\pi} \int_{-\infty}^0 \eul^{-\imag \xi_2u} \frac{|\varphi_2(x)|}{|\varphi_1(x)-u|^2+|\varphi_2(x)|^2}\,{\D}u\\
    &\qquad\mbox{} + \frac{1}{\pi} \int_0^{+\infty} \eul^{\imag \xi_1u }\frac{|\varphi_2(x)|}{|\varphi_1(x)- u|^2+|\varphi_2(x)|^2}\,{\D}u.\tag*{\qed}
\end{align*}
\end{proof}

For the proof of Theorem~\ref{t1} we need some preparations. In order to keep the presentation self-contained, we quote the classical result by Jerison \& Kenig \cite[Theorem~4.1]{JK95}.
\begin{theorem}[Jerison \& Kenig]\label{JK}
    Let $\sigma\in (0,1)$, $k\in \nat_0$ and $p\in [1, \infty]$. For any function $u$ which is harmonic on a bounded domain $\Omega$, the following assertions are equivalent:
    \begin{enumerate}\renewcommand{\theenumi}{\textup{\alph{enumi}}}
    \item\label{JK-a}
        $f\in B_{pp}^{k+\sigma}(\Omega)$;
    \item\label{JK-b}
        $\dist(x,\partial\Omega)^{1-\sigma}\,\left| \nabla^{k+1} f\right| + \left| \nabla^k f \right| + \left| f\right| \in L_p(\Omega)$.
    \end{enumerate}
\end{theorem}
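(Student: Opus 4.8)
The plan is to prove the equivalence by passing to a \textbf{Whitney decomposition} of the interior of $\Omega$ and exploiting that harmonicity lets one trade exactly one order of smoothness against one power of the distance to the boundary. First I would fix a decomposition $\Omega=\bigcup_j Q_j$ into dyadic cubes whose side lengths $\ell_j$ satisfy $\ell_j\asymp\dist(Q_j,\partial\Omega)$ and whose moderate dilates $Q_j^*:=\tfrac{9}{8}Q_j$ have bounded overlap and still lie in $\Omega$. On each $Q_j^*$ the distance to the boundary is comparable to the constant $\ell_j$, so the weight $\dist(x,\partial\Omega)^{1-\sigma}$ may be replaced by $\ell_j^{\,1-\sigma}$ up to fixed constants. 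Since $f$ is harmonic it is $C^\infty$ in the interior, so all the derivatives appearing below exist classically, and the interior (Cauchy-type) estimate applies: for harmonic $f$ on $Q_j^*$ and every $m$,
\[
    \sup_{Q_j}\bigl|\nabla^m f\bigr|
    \le \frac{C_m}{\ell_j^{\,d/p}}\sum_{i=0}^{m}\ell_j^{\,i-m}\bigl\|\nabla^i f\bigr\|_{L_p(Q_j^*)}.
\]
This is the mechanism that converts $(k{+}1)$-st derivatives into lower-order quantities weighted by negative powers of $\ell_j$.

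The second ingredient is the intrinsic first-order difference description of the Besov seminorm, which for $0<\sigma<1$ reads
\[
    |f|_{B_{pp}^{k+\sigma}(\Omega)}^p
    \asymp \sum_{|\alpha|=k}\int_\Omega\int_{|h|<c\,\dist(x,\partial\Omega)}
    \frac{\bigl|\partial^\alpha f(x+h)-\partial^\alpha f(x)\bigr|^p}{|h|^{\sigma p+d}}\,dh\,dx,
\]
the inner integral being restricted to $|h|$ small relative to the distance to $\partial\Omega$ so that the differences never leave $\Omega$. For the implication \textup{(b)}$\Rightarrow$\textup{(a)} I would estimate, on a fixed cube $Q_j$, the inner difference by a first-order Taylor bound $|\partial^\alpha f(x+h)-\partial^\alpha f(x)|\le|h|\sup_{Q_j^*}|\nabla^{k+1}f|$; since $\int_{|h|<R}|h|^{p-\sigma p-d}\,dh\asymp R^{(1-\sigma)p}$ with $R\asymp\ell_j$, and since the interior estimate gives $\sup_{Q_j^*}|\nabla^{k+1}f|^p\lesssim\ell_j^{-d}\|\nabla^{k+1}f\|_{L_p(Q_j^{**})}^p$, the contribution of $Q_j$ is dominated by $\ell_j^{(1-\sigma)p}\|\nabla^{k+1}f\|_{L_p(Q_j^{**})}^p\asymp\int_{Q_j^{**}}(\dist^{1-\sigma}|\nabla^{k+1}f|)^p\,dx$. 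Summing over $j$ and using bounded overlap yields the Besov seminorm, while the hypotheses $|\nabla^k f|,|f|\in L_p$ supply the remaining, lower-order parts of the full norm.

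The reverse implication \textup{(a)}$\Rightarrow$\textup{(b)} runs the same scheme backwards. On each cube I would apply the interior estimate to the harmonic function $\nabla^k f-c$, where $c$ is the mean of $\nabla^k f$ over $Q_j^*$, to get $\|\nabla^{k+1}f\|_{L_p(Q_j)}\lesssim\ell_j^{-1}\|\nabla^k f-c\|_{L_p(Q_j^*)}$, whence $\ell_j^{\,1-\sigma}\|\nabla^{k+1}f\|_{L_p(Q_j)}\lesssim\ell_j^{-\sigma}\|\nabla^k f-c\|_{L_p(Q_j^*)}$. A Jensen/Poincar\'e step bounds $\|\nabla^k f-c\|_{L_p(Q_j^*)}^p$ by the double integral $\ell_j^{-d}\iint_{|h|\lesssim\ell_j}|\nabla^k f(x+h)-\nabla^k f(x)|^p\,dh\,dx$, and because $|h|^{-\sigma p-d}\ge(c\ell_j)^{-\sigma p-d}$ on the relevant range this is controlled, after multiplication by $\ell_j^{-\sigma p}$, by the local $B_{pp}^{k+\sigma}$ seminorm on $Q_j^*$. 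Summation over the Whitney cubes then recovers $\|\dist^{1-\sigma}\nabla^{k+1}f\|_{L_p(\Omega)}$, and $|\nabla^k f|,|f|\in L_p$ follow from the embedding $B_{pp}^{k+\sigma}(\Omega)\subset W_p^k(\Omega)\subset L_p(\Omega)$.

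The step I expect to be the main obstacle is reconciling the \emph{global} Besov norm on $\Omega$ with the $\ell_p$-sum of the local contributions: the intrinsic difference characterization quoted above is only valid once one knows that, for the class of domains in question, differences with $|h|$ comparable to $\dist(x,\partial\Omega)$ may be suppressed without loss. Such long-range differences straddle several Whitney cubes, so either one restricts attention to the intrinsic definition of $B_{pp}^{k+\sigma}(\Omega)$ from the outset, or one controls the crossing differences by a telescoping/chaining argument along chains of neighbouring cubes, which is precisely where the bounded-overlap and comparability properties of the decomposition are essential. One must also track the exact exponent $1-\sigma=(k+1)-(k+\sigma)$ by verifying that the interior estimate and the Poincar\'e inequality scale homogeneously of the correct degree, and treat the endpoints $p=1$ and $p=\infty$ with the appropriate integral- and supremum-versions of the same inequalities. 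It is exactly the harmonicity of $f$ that makes the exchange of one derivative for one power of the distance lossless in \emph{both} directions; for a general $f$ only one of the two inequalities would survive.
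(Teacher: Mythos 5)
The paper contains no proof of this statement for you to be compared against: Theorem~\ref{JK} is quoted, explicitly ``to keep the presentation self-contained'', from Jerison \& Kenig \cite[Theorem~4.1]{JK95}, and it is used in this paper only as a black box in the proof of Theorem~\ref{t1}. Measured against the cited source rather than the paper, your scheme is the standard route to equivalences of this kind and is, in substance, the mechanism behind the quoted result: Whitney cubes $Q_j$ with $\ell_j\asymp\dist(Q_j,\partial\Omega)$, interior estimates for harmonic functions to trade one derivative against one power of $\ell_j$ (your sup-bound via subharmonicity of $|\nabla^{k+1}f|^p$ and the Caccioppoli-type bound $\|\nabla^{k+1}f\|_{L_p(Q_j)}\lesssim\ell_j^{-1}\|\nabla^k f-c\|_{L_p(Q_j^*)}$ are both correct), and a first-difference characterization of the $B_{pp}^{k+\sigma}$ seminorm; the exponent bookkeeping $1-\sigma=(k+1)-(k+\sigma)$ checks out in both directions.

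The genuine gap is the one you flag yourself, and it should be promoted from ``expected obstacle'' to ``missing hypothesis plus missing argument''. The equivalence of the norm of $B_{pp}^{k+\sigma}(\Omega)$ (a restriction space) with the intrinsic seminorm using only differences $|h|<c\,\dist(x,\partial\Omega)$ is a property of the \emph{domain}, not a definition-chasing fact: it holds for bounded Lipschitz (more generally uniform or $(\varepsilon,\delta)$-) domains, where it is proved precisely by the chaining along Whitney cubes that you defer, or by Jones-type extension, and it fails for general bounded domains. Indeed the statement as quoted above, with ``bounded domain'', is not literally correct: in a slit disk the bounded harmonic function $u=\Re z^{1/2}$ (branch cut along the slit) satisfies condition~(\ref{JK-b}) with $k=0$, $p=2$ and any $\sigma\in(\tfrac12,1)$, since $|\nabla u|\asymp r^{-1/2}$ near the tip, yet it belongs to no restriction space $B_{22}^{\sigma}(\Omega)$ with $\sigma>\tfrac12$, because its one-sided boundary values $\pm r^{1/2}$ on the slit disagree while any $H^\sigma(\real^2)$ extension would force them to coincide; Jerison \& Kenig accordingly assume $\Omega$ bounded Lipschitz. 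Note the asymmetry: your direction (\ref{JK-a})$\Rightarrow$(\ref{JK-b}) survives on an arbitrary bounded domain, since the restricted-difference seminorm of any global extension dominates the intrinsic one, but (\ref{JK-b})$\Rightarrow$(\ref{JK-a}) cannot close without the Lipschitz/uniformity hypothesis and the chaining argument, which your proposal names but does not carry out. For the use made of the theorem in this paper the lacuna is harmless, since it is applied on $\LLB=\LL\cap B(0,1)$, which is Lipschitz. (Two minor points: the mixing of $u$ and $f$ in the statement is a typo of the quotation, not of your argument, and, as you note, at $p=\infty$ all cube-sums must be read as suprema.)
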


We will also need the following technical lemma. Recall that $\LLB = \LL\cap B(0,1)$.
\begin{lemma}\label{B1}
    Suppose that $f_0\in W_p^1(\real)$ for some $p>2$. Then $f\in  W_2^1 (\LLB)$.
\end{lemma}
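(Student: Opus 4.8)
The plan is to exploit the factorization $f=U\circ\varphi$, where $U$ is the harmonic (Poisson) extension of $f_0$ to $\mathbb{H}$ and $\varphi:\LL\to\mathbb{H}$ is the conformal bijection from \eqref{Conf1}. Indeed, \eqref{fD2} reads $f(x)=U(\varphi(x))$ with
\[
    U(w):=\frac1\pi\int_\real f_0(u)\,\frac{|w_2|}{(w_1-u)^2+w_2^2}\,\D u,\qquad w=(w_1,w_2)\in\mathbb{H},
\]
which is harmonic on $\mathbb{H}$, and $\varphi(\LLB)=\mathbb{H}\cap B(0,1)$ since $|z|<1$ forces $|\varphi(z)|=|z|^{2/3}<1$. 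First I would dispose of the $L_2$-part: for $p>2$ the one-dimensional Sobolev embedding $W_p^1(\real)\subset C_b(\real)$ makes $f_0$, and hence $f$, bounded, so $f\in L_2(\LLB)$ because $\LLB$ is bounded. It then remains to show $\nabla f\in L_2(\LLB)$.

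The key reduction is a conformal change of variables. Since $\varphi$ is holomorphic, the chain rule together with the Cauchy--Riemann equations gives $|\nabla f(x)|^2=|\nabla U(\varphi(x))|^2\,|\varphi'(x)|^2$, while the Jacobian determinant of $\varphi$ equals $|\varphi'(x)|^2$. Substituting $w=\varphi(x)$ therefore yields
\[
    \int_{\LLB}|\nabla f(x)|^2\,\D x=\int_{\mathbb{H}\cap B(0,1)}|\nabla U(w)|^2\,\D w .
\]
The singularity of $\varphi'$ at the corner $0\in\partial\LL$ is harmless here, as both integrands are non-negative and $\varphi$ is a genuine conformal bijection of the open sets; $f=U\circ\varphi$ is smooth on the interior $\LL$. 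Thus it suffices to prove $U\in W_2^1(\mathbb{H}\cap B(0,1))$.

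To establish this I would split the datum. Fix $\chi\in C_c^\infty(\real)$ with $\chi\equiv 1$ on $[-2,2]$ and write $f_0=f_1+f_2$ with $f_1:=\chi f_0$ and $f_2:=(1-\chi)f_0$, inducing $U=U_1+U_2$ by linearity of the Poisson integral. The function $f_1$ is compactly supported with $f_0$ bounded and $f_0'\in L_p\subset L_2$ on its support, so $f_1\in W_2^1(\real)$ and hence $f_1$ lies in the trace space $W_2^{1/2}(\real)=B_{22}^{1/2}(\real)$ of $W_2^1(\mathbb{H})$ (the trace theorem quoted in the introduction, Jerison \& Kenig \cite[Theorem~3.1]{JK95}). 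As the harmonic extension is a bounded right inverse of the trace operator, $U_1\in W_2^1(\mathbb{H})$, whence $\nabla U_1\in L_2(\mathbb{H})$. For $f_2$, whose boundary data vanishes on $(-2,2)\supset[-1,1]$, the Schwarz reflection principle shows that $U_2$ extends harmonically across $(-2,2)\times\{0\}$ and is therefore smooth on the compact set $\overline{\mathbb{H}\cap B(0,1)}$, so $\nabla U_2\in L_2(\mathbb{H}\cap B(0,1))$. Adding the two contributions gives $\nabla U\in L_2(\mathbb{H}\cap B(0,1))$, and combined with the previous two paragraphs this yields $f\in W_2^1(\LLB)$.

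The step I expect to require the most care is precisely this splitting. One cannot simply invoke a global bound $U\in W_2^1(\mathbb{H})$, because $W_p^1(\real)\not\subset L_2(\real)$ on the whole line: slowly decaying data keep $f_0'$ in $L_p$ while $f_0\notin L_2$, so the harmonic extension need not have a globally square-integrable gradient. The cutoff is thus essential, and the two points demanding verification are that the far field $U_2$ is genuinely smooth up to the boundary segment (the reflection argument) and that the compactly supported near field $f_1$ indeed lands in $W_2^{1/2}$ (via $W_2^1\subset W_2^{1/2}$). Both are routine once these ingredients are in place.
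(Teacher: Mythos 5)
Your proof is correct, but it takes a genuinely different route from the paper. The paper works directly with the explicit polar-coordinate representation \eqref{f1-0}: it differentiates under the integral sign to get \eqref{f1}, applies H\"older's inequality with the conjugate exponent $q=p/(p-1)<2$, and checks by hand that the resulting singular factors $\rho^{2/q}$ and $|\sin\Phi_\theta|^{-2+2/q}$ are integrable over $\LLB$. You instead observe that $f=U\circ\varphi$ with $U$ the Poisson extension of $f_0$, invoke the conformal invariance of the Dirichlet integral (correct, via Cauchy--Riemann plus the Jacobian $|\varphi'|^2$, and legitimate despite the corner since $\varphi$ is a conformal bijection of the open sets), and reduce to $U\in W_2^1(\mathbb{H}\cap B(0,1))$, which you obtain by the cutoff splitting $f_0=\chi f_0+(1-\chi)f_0$, the energy identity for harmonic extensions of $H^{1/2}$-data, and Schwarz reflection for the far field. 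The cutoff is indeed essential, exactly as you say, since $W_p^1(\real)\not\subset L_2(\real)$ globally. Your approach buys conceptual clarity---it makes transparent that the corner is invisible at the $W_2^1$ level because Dirichlet energy is a conformal invariant---and it is in fact slightly stronger: it goes through already for $p=2$ (you use $p>2$ only to place $\chi f_0'$ in $L_2$, which needs merely $p\geq 2$, and boundedness of $f_0$ holds for all $p\geq 1$), whereas the paper's H\"older argument genuinely needs $q<2$, i.e.\ $p>2$. What the paper's computational route buys in exchange is self-containedness (no trace-space or Fourier machinery) and, more importantly, the explicit kernel bounds involving $K$ and $K^{\star}$ that are recycled in the proof of Theorem~\ref{t1} for the second derivatives, where conformal invariance no longer helps. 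One small imprecision to fix in your write-up: the claim ``$U_1\in W_2^1(\mathbb{H})$'' is not quite right on the unbounded half-plane---the Fourier computation $\int_{\mathbb{H}}|\nabla U_1|^2\,{\D}w \asymp \int_\real |\xi|\,|\widehat{f_1}(\xi)|^2\,{\D}\xi$ controls only the gradient, and $U_1$ itself need not lie in $L_2(\mathbb{H})$; but since $U_1$ is bounded, it is square-integrable on the bounded set $\mathbb{H}\cap B(0,1)$, which is all your argument requires.
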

\begin{proof}
    Using  the representation \eqref{f1-0}, the H\"older inequality and a change of variables,  we get
\begin{align*}
    &\int_{\pi/2}^{2\pi} \int_0^1 |f(r,\theta)|^2 r\,{\D}r\,{\D}\theta\\
    &=  \frac{3}{2\pi^2 }\int_{\pi/2}^{2\pi} \int_0^1  \rho^2
        \left|\int_\real f_0(w)\frac{|\rho \sin  \Phi_\theta|}{(w-\rho \cos  \Phi_\theta)^2+ (\rho \sin  \Phi_\theta)^2}{\D}w\right|^2 \,{\D}\rho\,{\D}\theta\\
    &\leq C_1\int_{\pi/2}^{2\pi} \int_0^1  \rho^2
        \left[
        \left(\int_\real \left| f_0(v)\right|^p{\D}v\right)^{1/p}
        \left(\int_\real \frac{|\rho \sin  \Phi_\theta|^q}{(v^2 + |\rho \sin  \Phi_\theta|^2)^q}{\D}v \right)^{1/q}
        \right]^2\,{\D}\rho\,{\D}\theta\\
    &\leq C_2\int_{\pi/2}^{2\pi} \int_0^1  \rho^2 \left|\rho \sin  \Phi_\theta\right|^{-2+2/q}
        \left(\int_\real \frac{1}{(w^2 +1)^q}{\D}w \right)^{2/q}\,{\D}\rho\,{\D}\theta\\
    &= C_3 \int_{\pi/2}^{2\pi} \int_0^1 \rho^{2/q} \left|\sin  \Phi_\theta\right|^{-2+2/q} \,{\D}\rho\,{\D}\theta,
\end{align*}
where $p^{-1}+q^{-1}=1$. Because of $p>2$ we have $-2+2/q>-1$, hence $q<2$. Note that the inequalities  $2x/\pi \leq \sin x\leq x$ for $x\in [0,\pi/2]$, imply
$$
    \int_{\pi/2}^{2\pi} \left|\sin \Phi_\theta\right|^{-1+\epsilon} \,{\D}\theta
    = \int_0^{\pi} \left|\sin \varphi\right|^{-1+\epsilon} \,{\D}\varphi
    = 2 \int_0^{\pi/2} \left|\sin\varphi\right|^{-1+\epsilon}\,{\D}\varphi
    < \infty.
$$
This shows that $f\in L_2(\LLB)$.

Recall that the partial derivatives of the polar coordinates are
\begin{equation}\label{der10}
    \frac{\partial}{\partial x_1}  r = \cos \theta, \quad
    \frac{\partial}{\partial x_1}  \theta= - \frac{\sin \theta}{r}, \quad
    \frac{\partial}{\partial x_1}  \Phi_\theta= \frac{2}{3} \frac{\partial}{\partial x_1}  \theta =  - \frac{2\sin \theta}{3r}.
\end{equation}
Therefore, we have for $\theta\in (\pi/2,2\pi)$
\begin{align}
\notag
    \frac{\partial}{\partial x_1} &f(r,\theta)
    = \frac{1}{\pi} \int_\real f_0' \left(  r^{2/3} \cos \Phi_\theta- vr^{2/3} \sin \Phi_\theta \right) \, \frac{1}{v^2+1}\times\\
    &\notag\quad \mbox{}\times \left[\frac{2\cos \theta }{3 r^{1/3} } \left(  \cos \Phi_\theta- v \sin \Phi_\theta \right)\right.
    \\
    &\label{f1}\quad\qquad\left.\mbox{}+r^{2/3} \left( \frac{-2\sin \theta}{3r}\right) ( -v \cos \Phi_\theta -\sin \Phi_\theta )  \right] {\D}v\\
    &\notag= \frac{2}{3 \pi r^{1/3} } \int_\real f_0'\left(  r^{2/3} \cos \Phi_\theta- vr^{2/3} \sin \Phi_\theta \right)  \, \frac{1}{v^2+1}\times\\
    &\notag\quad \mbox{}\times \left[\left(   \cos \Phi_\theta- v \sin \Phi_\theta \right) \cos \theta
     +( v \cos \Phi_\theta +\sin \Phi_\theta ) \sin \theta  \right] {\D}v\\
    &\notag= \frac{2}{3\pi r^{1/3} } \int_\real f_0' \left(  r^{2/3} \cos \Phi_\theta- vr^{2/3} \sin \Phi_\theta \right) \, \frac{K(\theta,v)}{v^2+1}\,{\D}v,
\end{align}
where
\begin{equation}\label{Kav}
K(\theta,v):=\cos \omega_\theta - v \sin \omega_\theta ,
\end{equation}
and
\begin{equation}\label{tha2}
\omega_\theta= \frac{1}{3} \left(2\pi-\theta\right).
\end{equation}
Note that  $\Phi_{\pi/2} =\pi$ and $\omega_{\pi/2}= \pi/2$.

Let us show that the first partial derivatives of $f$ belong to $L_2(\LLB)$. Because of the symmetry of $\LLB$, is it enough to check this for $\frac{\partial}{\partial x_1} f$.

Using the estimate $|K(\theta, v)| (1+v^2)^{-1} \leq C(1+|v|)^{-1}$, a change of variables and the H\"older inequality, we get
\begin{align*}
    \int_0^1 &\int_{\pi/2}^{2\pi} \left| \frac{\partial}{\partial x_1}  f(r,\theta)\right|^2 r\,{\D}\theta\,{\D}r\\
    &=  \int_0^ 1 \int_{\pi/2}^{2\pi}
        \left|
        \int_\real \frac{2}{3 \pi r^{1/3} }\, f_0'
        \left(r^{2/3} \cos \Phi_\theta- vr^{2/3} \sin \Phi_\theta \right)
        \frac{K(\theta, v)}{1+v^2} \,{\D}v
        \right|^2
        r\,{\D}\theta \,{\D}r\\
    &=  \frac{2}{3\pi^2} \int_0^1 \int_{\pi/2}^{2\pi} \rho
        \left|
        \int_\real f_0'\left(\rho \cos \Phi_\theta- v\rho\sin \Phi_\theta \right) \frac{K(\theta, v)}{1+v^2}\,{\D}v
        \right|^2
        \,{\D}\theta\,{\D}\rho\\
    &\leq C_1 \int_0^1 \int_{\pi/2}^{2\pi}\rho
        \left(\int_\real \frac{|f_0'(w)|}{|\rho \sin \Phi_\theta| +|w- \rho \cos \Phi_\theta|}{\D}w \right)^2
        \,{\D}\theta \,{\D}\rho\\
    &\leq C_2 \left(\int_\real \left| f_0' (w)\right|^p {\D}w\right)^{2/p}
    \left( \int_\real  \frac{1}{(1+|w|)^q} {\D}w\right)^{2/q} \times\\
    &\qquad\mbox{}\times \int_{\pi/2}^{2\pi} \int_0^1\rho  \left( |\rho \sin \Phi_\theta|^{-1+1/q}\right)^2\,{\D}\rho\,{\D}\theta\\
    &=  C_3 \int_{\pi/2}^{2\pi}\int_0^1  |\sin \Phi_\theta|^{-2+2/q}  \rho^{-1+2/q}  \,{\D}\rho \,{\D}\theta
    <\infty;
\end{align*}
in the last line we use again that $-2+2/q>-1$.
\smartqed\qed
\end{proof}

\begin{proof}[Proof of Theorem~\ref{t1}]  It is enough to consider the set $\LLB$.
We verify that condition~\ref{JK-b} of Theorem~\ref{JK} holds true. We check whether
$$
    \dist(0,\cdot)^{1-\sigma} \left| \frac{\partial^2}{\partial x_1^2}   f \right| + \left| \frac{\partial}{\partial x_1}  f\right|+\left|f\right|
    \quad\text{is in $L_2(\LLB)$ or not.}
$$
From Lemma~\ref{B1} we already know that $\left| \frac{\partial}{\partial x_1}  f\right|+\left|f\right|\in  L_2(\LLB)$. Let us check when
$$
    \dist(0,\cdot)^{1-\sigma} \left| \frac{\partial^2}{\partial x_1^2} f \right| \in L_2(\LLB).
$$
We will only work out the term $\frac{\partial^2}{\partial x_1^2} f(r,\theta)$ since the calculations for $\tfrac{\partial^2}{\partial x_1\partial x_2} f(r,\theta)$ are similar. We have
\begin{align*}
    \frac{\partial}{\partial x_1}  K(\theta,v)
    = \frac{\sin \theta}{3r}  \left(- v \cos \omega_\theta - \sin \omega_\theta\right)
    =: \frac{\sin \theta}{3r} K^{\star}(\theta,v),
\end{align*}
where use that $\tfrac{\partial}{\partial x_1}  \omega_\theta= -\frac{1}{3} \tfrac{\partial}{\partial x_1} \theta= \frac{ \sin \theta}{3r}$ and set
\begin{equation}\label{K1}
    K^{\star}(\theta,v):= -v \cos \omega_\theta - \sin \omega_\theta.
\end{equation}
Therefore, differentiating $\tfrac{\partial}{\partial x_1}  f$---we use the representation \eqref{f1}---with respect to $x_1$  gives
\begin{align*}
    \tfrac{\partial^2}{\partial x_1^2} f(r,\theta)
    &= - \frac{2\cos \theta}{9  \pi r^{4/3}}  \int_\real  f_0' \left( r^{2/3} (\cos \Phi_\theta- v \sin \Phi_\theta)\right)\frac{K(\theta, v)}{1+v^2}  \,{\D}v\\
    &\qquad \mbox{}+\frac{4}{9 \pi r^{2/3}}    \int_\real  f_0'' \left( r^{2/3} (\cos \Phi_\theta- v \sin \Phi_\theta)\right)\frac{K^2(\theta, v)}{1+v^2}  \,{\D}v\\
    &\qquad \mbox{}+\frac{2\sin \theta}{9  \pi r^{4/3}}   \int_\real  f_0' \left( r^{2/3} (\cos \Phi_\theta- v \sin \Phi_\theta)\right)\frac{K^{\star}(\theta, v)}{1+v^2}  \,{\D}v.
\end{align*}

Note that
\begin{equation}\label{int2}\begin{aligned}
    &\int_{\LLB} \dist (x, \partial \LLB)^{2-2\sigma} \left|\frac{\partial^2}{\partial x_1^2} f(x)\right|^2  \,{\D}x\\
    &\qquad= \int_0^1 \int_{\pi/2}^{2\pi} \dist ((r,\theta), \partial \LLB)^{2-2\sigma} \left|\frac{\partial^2}{\partial x_1^2}f(r,\theta)\right|^2 r\,{\D}\theta\,{\D}r.
\end{aligned}\end{equation}
Since only the values near the boundary $\Gamma:= \partial \LLB\cap \partial \LL$ determine the convergence of the integral, it is enough to  check that
\begin{equation}\label{int22}
    \mathbb{I}
    = \int_0^1 \int_{\pi/2}^{2\pi} \dist ((r,\theta), \Gamma)^{2-2\sigma} \left|\frac{\partial^2}{\partial x_1^2} f(r,\theta)\right|^2 r\,{\D}\theta\,{\D}r
\end{equation}
is infinite if $\sigma\geq 2/3$ and finite if $\sigma<2/3$.

\enlargethispage{.66\baselineskip}
We split $\LLB$ into three parts. For $\delta>0$ small enough we define, see Figure~\ref{k1-k3},
\begin{figure}\label{k1-k3}
\sidecaption
    \includegraphics[scale=0.8]{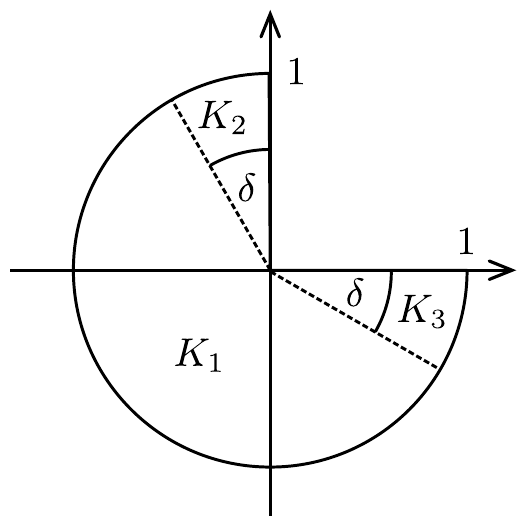}
    \caption{The set $\LLB$ is split into three disjoint parts $K_1$, $K_2$, $K_3$.}
\end{figure}
\begin{gather*}
    K_1:= \left\{ (r,\theta):\, 0<r<1, \;\; \frac{\pi}{2}+\delta<\theta<2\pi-\delta\right\},\\
    K_2:= \left\{ (r,\theta):\, 0<r<1, \;\; \frac{\pi}{2} \leq \theta<\frac{\pi}{2}+\delta\right\},\\
    K_3:= \left\{ (r,\theta):\, 0<r<1, \;\; 2\pi-\delta <\theta\leq 2\pi\right\}.
\intertext{Splitting the integral accordingly, we get}
    \mathbb{I}
    = \left(\int_{K_1}+\int_{K_2}+\int_{K_3}\right)\dist ((r,\theta), \Gamma)^{2-2\sigma} \left|\frac{\partial^2}{\partial x_1^2} f(r,\theta)\right|^2 r\,{\D}\theta\,{\D}r;
\end{gather*}
in order  to show that $\mathbb{I}$ is infinite if $\sigma\geq 2/3$, it is enough to see that the integral over $K_1$  is infinite.
Noting that in $K_1$ we have $\dist((r,\theta), \Gamma)\asymp r$, we get
\begin{equation}\label{eq1}
\begin{aligned}
    \int_{K_1} &\left|r^{1-\sigma}\tfrac{\partial^2}{\partial x_1^2} f(r,\theta)\right|^2 r\,{\D}\theta \,{\D}r\\
    &= \int_{K_1} r\left|r^{1-\sigma}  \frac{2}{9 \pi   r^{4/3}} \right|^2 \times\\
    &\quad \mbox{}\times \left| \int_\real   f_0'  \left( r^{2/3} (\cos \Phi_\theta- v \sin \Phi_\theta)\right)
    \frac{K^{\star}(\theta, v)\sin \theta- K(\theta,v)\cos \theta}{1+v^2} \,{\D}v\right.\\
    &\qquad \left.\mbox{}+2r^{2/3}  \int_\real   f_0''  \left( r^{2/3} (\cos \Phi_\theta- v \sin \Phi_\theta)\right)\frac{K^2(\theta,v)}{1+v^2}\,{\D}v\right|^2 \,{\D}r \,{\D}\theta\\
    &= \frac{4}{81  \pi^2  } \int_{K_1}  r^{1/3-2\sigma}
        \left|  \int_\real   f_0'  \left( r^{2/3} (\cos \Phi_\theta- v \sin \Phi_\theta)\right)\frac{K^{\star\star}(\theta, v)}{1+v^2} \,{\D}v\right.\\
    &\qquad \left.\mbox{}+ 2r^{2/3}  \int_\real   f_0''\left( r^{2/3} (\cos \Phi_\theta- v \sin \Phi_\theta)\right)\frac{K^2(\theta,v)}{1+v^2} \,{\D}v\right|^2 \,{\D}r\,{\D}\theta\\
    &= \frac{4}{81  \pi^2  }\int_{K_1}  r^{1/3-2\sigma} \left|J(r^{2/3},\theta)+ I(r^{2/3},\theta)\right|^2 \,{\D}r\,{\D}\theta\\
    &= \frac{2}{27  \pi^2 }\int_{K_1} \rho^{1-3\sigma} \left|J(\rho,\theta)+ I(\rho,\theta)\right|^2 \,{\D}\rho\,{\D}\theta,
\end{aligned}
\end{equation}
where we use the following shorthand notation
\begin{gather*}
    K^{\star\star}(\theta,v)
    := K^{\star}(\theta, v)\sin \theta- K(\theta,v)\cos \theta
    = -v \sin (\theta-\omega_\theta)- \cos  (\theta-\omega_\theta),
\\
    J(\rho,\theta)
    := \int_\real f_0' \left(\rho(\cos \Phi_\theta- v \sin \Phi_\theta)\right)\frac{K^{\star\star}(\theta, v)}{1+v^2}\,{\D}v,
\\
    I(\rho,\theta)
    := 2\rho \int_\real f_0'' \left( \rho(\cos \Phi_\theta- v \sin \Phi_\theta)\right)\frac{K^2(\theta, v)}{1+v^2} \,{\D}v.
\end{gather*}
Observe that $\theta-\omega_\theta\in (0,2\pi)$ for $\theta\in (\frac{\pi}{2}, 2\pi)$, and
$\theta-\omega_\theta\in (\frac{4\delta}{3},2\pi-\frac{4\delta}{3})$ whenever $\theta\in  (\frac{\pi}{2}+\delta, 2\pi-\delta)$.

Without loss of generality we may assume that $J(\rho,\theta)+ I(\rho,\theta)\not\equiv 0$ on $K_1$. Let us show that
$\lim_{\rho\to 0} |J(\rho,\theta)+ I(\rho,\theta)| = C(f_0, \theta)>0$. This guarantees that we can choose some $K_{11}\subset K_1$ such that
\begin{equation}\label{K11}
    |J(\rho,\theta)+ I(\rho,\theta)|\geq C(f_0)>0 \quad \text{on $K_{11}$.}
\end{equation}

Using the change of variables $x=v\rho$ we get, using dominated convergence,
\begin{align*}
I(\rho,\theta)
&= 2\int_\real f_0''\left(\rho\cos \Phi_\theta- x \sin \Phi_\theta\right)\frac{\big(\rho \cos \omega_\theta -x \sin \omega_\theta)^2}{\rho^2+x^2} \,{\D}x\\
& \underset{\rho\to 0}{\longrightarrow} \,2\sin^2 \omega_\theta \int_\real f_0''\left(-x \sin \Phi_\theta\right) \,{\D}x=\frac{2\sin^2 \omega_\theta}{\sin \Phi_\theta} \int_\real f_0''(x) \,{\D}x =0,
\end{align*}
since we assume that $f_0\in W_2^1 (\real)$.

For $J(\rho,\theta)$ we have, using the same change of variables,
\begin{align*}
J(\rho, \theta)&= -\int_\real f_0'\left(\rho \cos \Phi_\theta- \rho v \sin \Phi_\theta\right)\frac{\cos (\theta-\omega_\theta) + v \sin(\theta-\omega_\theta)}{1+v^2} \,{\D}v\\
&= -\int_\real f_0'\left(\rho \cos \Phi_\theta- x \sin \Phi_\theta\right)\frac{\rho\cos (\theta-\omega_\theta) + x \sin(\theta-\omega_\theta)}{\rho^2+x^2} \,{\D}x\\
&= - \left( \int_{|x|>\epsilon} +\int_{|x|\leq \epsilon} \right) \left( \dots\right) \,{\D}x.
\end{align*}
The first integral can be treated with the dominated convergence theorem because we have $f_0'\in L_1(\real)$ and
$\rho (\rho^2+x^2)^{-1} \leq x^{-2}$, $x(\rho^2+x^2)^{-1} \leq x^{-1}$ are bounded for $|x|>\epsilon$. Therefore,
$$
    \lim_{\rho\to 0} \left[-  \int_{|x|>\epsilon}\left(\dots \right)\,{\D}x\right]
    = - \sin(\theta-\omega_\theta)\int_{|x|>\epsilon}\frac{f'_0(-x\sin \Phi_\theta)}{x}\,{\D}x.
$$
Now we estimate the two parts of the second integral. For
$$
    -\int_{|x|\leq \epsilon} f_0'(\rho \cos \Phi_\theta-x\sin \Phi_\theta)
    \frac{\rho\cos(\theta-\omega_\theta)}{\rho^2+x^2}\,{\D}x
$$
we have
$\rho(\rho^2+x^2)^{-1}\leq \epsilon^{-1} \rho x(\rho^2+x^2)^{-1} \leq \epsilon^{-1}$,
so this term tends to $0$ by the dominated convergence theorem.  For the second term in this integral we have using a change of variables and the Cauchy--Schwarz inequality,
\begin{align*}
  |\sin& (\theta-\omega_\theta)|\cdot   \left| \int_{|x|\leq \epsilon} f_0'
 \left( \rho \cos\Phi_\theta -x \sin \Phi_\theta\right) \frac{x}{x^2+\rho^2}
 dx\right| \\
 & \leq
 \left|\int_{|w|\leq \epsilon} \left(f_0' \left( \rho \cos \Phi_\theta- w \sin \Phi_\theta \right)- f_0' \left( \rho \cos \Phi_\theta\right)\right)\frac{w}{\rho^2+w^2}\,{\D}w\right|
 \\
    &\leq  \int_{|w|\leq \epsilon} \int_0^1 |f_0''(\rho \cos \Phi_\theta- r w \sin \Phi_\theta) | \,{\D}r \,{\D}w\\
    &\leq  \sqrt{2\epsilon} \int_0^1 \left(\int_\real |f_0''(\rho \cos \Phi_\theta- v \sin \Phi_\theta)|^2 \,{\D}v\right)^{1/2} \,{\D}r\\
    &\leq C_1(\theta) \sqrt\epsilon \,\|f_0''\|_2.
\end{align*}
Altogether we have  upon letting $\rho \to 0$ and then $\epsilon\to 0$, that
\begin{equation}\label{I10}
    \lim_{\rho\to 0} I (\rho,\theta) = 0,
\end{equation}
\begin{equation}\label{I10}
   \liminf_{\epsilon \to 0} \lim_{\rho\to 0} J (\rho,\theta)
    =\sin (\omega_\theta-\theta)  \liminf_{\epsilon\to 0} \int_{|x|>\epsilon} \frac{f_0'(x)}{x}\,{\D}x.
\end{equation}
If the ``$\liminf$'' diverges, it is clear that \eqref{K11} holds, if it converges but is still not equal to 0, we can choose $K_{11}$ in such a way that $\sin (\omega_\theta-\theta)\neq 0$.  Thus, the integral over $K_1$ blows up as $\int_0^1 \rho^{1-3\sigma}\,{\D}\rho=\infty$ for any $\sigma\geq 2/3$.

\medskip
To show the convergence result, we have to estimate $I$ and $J$ from above.
Write
\begin{align*}
 J(\rho,\theta)
 =& -\int_\real f_0'\left(\rho (\cos \Phi_\theta- \nu \sin \Phi_\theta)\right)\frac{\nu\sin (\theta-\omega_\theta)}{1+\nu^2} \,{\D}\nu  \\
  &\mbox{} -\int_\real   f_0'\left(\rho ( \cos \Phi_\theta- \nu \sin \Phi_\theta)\right)
  \frac{\cos(\theta-\omega_\theta)}{1+\nu^2} \,{\D}\nu =: J_1(\rho,\theta) + J_2(\rho,\theta).
\end{align*}
Since $f_0\in W_p^1(\real)$, using the H\"older inequality and a change of variables give
\begin{equation}\label{J2}
\begin{aligned}
    |J_{1}(\rho,\theta)|
    &\leq \left(\int_\real \left| f_0' \left( \rho(\cos \Phi_\theta- v \sin \Phi_\theta)\right)\right|^p\,{\D}v\right)^{\frac 1p}\left( \int_\real \left(\frac{v}{1+v^2}\right)^q \,{\D}v\right)^{\frac 1q}\\
    &\leq c  |\rho \sin \Phi_\theta|^{-1/p}
\end{aligned}
\end{equation}
for all $\theta \in [\pi/2,2\pi]$ and $\rho>0$. An even simpler calculation yields
\begin{equation}\label{J1}
    |J_{2}(\rho,\theta)|\leq c  |\rho \sin \Phi_\theta|^{-1/p}
\end{equation}
for all $\theta \in [\pi/2,2\pi]$ and $\rho>0$. Now we estimate $I(\rho,\theta)$.
Note that for every $\theta\in [\pi/2, 2\pi]$ we have $K^2 (\theta,\nu)/(1+\nu^2) \leq C$. By a change of variables we get
\begin{equation}\label{i1}
   \left| I(\rho, \theta)\right|
   \leq \frac{C_1}{|\sin \Phi_{\theta}|}  \int_\real  \left|f_0''(w+\rho  \cos \Phi_\theta )\right|\,{\D}w
   \leq\frac{C_2}{|\sin \Phi_{\theta}|}
\end{equation}
for all $\theta \in [\pi/2,2\pi]$ and $\rho>0$. Note that for  $\Phi_\theta\in [\pi + 2\delta/3, 2\pi-2\delta/3]$ it holds that $|\sin\Phi_\theta|>0$.  Thus, on $K_1$ we have
\begin{equation}\label{JK-K1}
|I(\rho, \theta)+ J(\rho, \theta)| \leq C\rho^{-1/p},
\quad \theta \in [\pi/2+\delta,2\pi-\delta],\;\rho>0,
\end{equation}
implying
$$
    \int_{ K_1}\left|r^{1-\sigma}\frac{\partial^2}{\partial x_1^2} f(r,\theta)\right|^2 r \,{\D}\theta \,{\D}r
    \leq C \int_0^1 \rho^{1-3\sigma-2/p} \,{\D}\rho.
$$
The last integral converges if $\sigma\in (0,2/3)$ and $p>\frac{2}{2-3\sigma}$.

In order to complete the proof of the convergence part, let us show that the integrals over  $K_2$ and $K_3$ are convergent for all $\sigma\in (0,1)$.

In the regions  $K_2$ and $K_3$ we have $\dist ((r,\theta), \Gamma)\leq r |\cos \theta|$ and $\dist ((r,\theta), \Gamma)\leq r |\sin \theta|$, respectively.  We will discuss only $K_2$ since $K_3$ can be treated in a similar way. We need to show that
\begin{equation}\label{K2-1}
    \int_{K_2}  \left|  |r\cos \theta| ^{1-\sigma}  \frac{\partial^2}{\partial x_1^2}  f(r,\theta) \right|^2 r\,{\D}r\,{\D}\theta
    < \infty
    \quad\text{for all\ } \sigma\in (0,1).
\end{equation}

From \eqref{J2},  \eqref{J1}  and \eqref{i1} we derive that for all $(\rho,\theta)\in \LLB$
\begin{equation}\label{j3}
    \left|J(\rho, \theta)+I(\rho,\theta) \right|
    \leq C \rho^{-\frac 1p} \left( |\sin \Phi_\theta|^{-1}+ |\sin \Phi_\theta|^{-\frac 1p}\right)
    \stackrel{}{\leq}{} C' \rho^{-\frac 1p}|\sin \Phi_\theta|^{-1}.
\end{equation}
Now we can use a calculation similar to \eqref{eq1} for $K_1$ to show that \eqref{K2-1} is finite and, therefore, it is enough to show that
\begin{equation}\label{K2-2}
    \int_{\frac{\pi}{2}}^{\frac{\pi}{2}+\delta}  \left( \frac{|\cos \theta|^{1-\sigma}}{\sin\Phi_\theta} \right)^2 {\D}\theta
    < \infty.
\end{equation}
Observe that $\lim_{\theta\to \frac{\pi}{2}} \cos \frac13(\pi+ \theta) / \cos \theta=\frac{1}{3}$, implying
$$
    \frac{|\cos \theta|^{1-\sigma}}{\sin\Phi_\theta}
    = \frac{| \cos \theta|^{1-\sigma}}{2 \sin \frac 13(\pi+ \theta) \cos \frac 13(\pi+ \theta)}
     \asymp |\cos\theta|^{-\sigma}
     \quad\text{as $\theta\to \dfrac{\pi}{2}$}.
$$
Therefore, it is sufficient to note that for any $\sigma \in (0,1)$
$$
    \int_{\pi/2}^{\pi/2+\delta} |\cos \theta|^{-2\sigma} \,{\D}\theta
    \asymp \int_0^1 \frac{{\D}x}{(1-x^2)^\sigma}
    =  \int_0^1 \frac{{\D}x}{(1-x)^\sigma(1+x)^\sigma}
    < \infty.
$$

Summing up, we have shown that
\begin{gather*}
    \dist(0,\cdot)^{1-\sigma} \left| \frac{\partial^2}{\partial x_1^2}   f \right| \in L_2(\LLB)
    \quad\text{resp.}\quad
    \notin L_2(\LLB),
\end{gather*}
according to $\sigma\in (0,2/3)$ or $\sigma\in [2/3,1)$.
\end{proof}

\begin{proof}[Proof of Theorem~\ref{t2}]
Let $\poisol$ be the solution to \eqref{P1} on $\LL$ with source function $g$, and define $w= g* N$ for the Newtonian potential $N$ on $\mathbb{R}^2$. As we have already mentioned in the introduction, $f:= w-\poisol$ is the solution to \eqref{D1} on $\LL$ with the boundary condition $h:= \tr w$ on $\partial \LL$.
Note that under the condition  $g\in H_1(\real^2)\cap  W_2^1(\LL)$   we have  $\Delta w= g$ (cf.\ Stein~\cite[Theorem III.3.3, p.~114]{stein}), which implies $w\in  W_1^{3}(\real^2)\cap W_2^{3}(\LL)$.  By the trace theorem we have $h\in W_1^{2} (\partial \LL )\cap W_2^{5/2}(\partial \LL)$, which in terms of $f_0$ means  $f_0\in W_1^{2} (\real)\cap W_2^{5/2}(\real)$.    The explosion result of Theorem~\ref{t1} requires  $f_0\in W_1^{2} (\real )\cap  W_2^2(\real)$ and \eqref{f0}.  The latter is guaranteed by the assumption on the trace in the statement of the theorem.   Hence, $f\notin W_{2,\mathrm{loc}}^{1+\sigma}(\LL)$, $\sigma\geq 2/3$. Since
$w\in  W_{2,\mathrm{loc}}^{2}(\LL)$, this implies that $F\notin  W_{2,\mathrm{loc}}^{1+\sigma}(\LL)$, $\sigma\geq 2/3$.
\smartqed\qed
\end{proof}

\subsection*{Acknowledgement}   We thank  S.\ Dahlke (Marburg) who pointed out the reference \cite{JK95},  N.\ Jacob (Swansea) for his suggestions on the representation of Sobolev--Slobodetskij spaces, and A.\ Bendikov (Wroc{\l}aw) who told us about the papers \cite{MM03}, \cite{MMY10}. We are grateful to B.\ B\"ottcher for drawing the illustrations and commenting on the first draft of this paper. Financial support from NCN grant
2014/14/M/ST1/00600 (Wroc{\l}aw) for V.~Knopova is gratefully acknowledged.

\end{document}